\documentclass{article}
\usepackage[utf8]{inputenc}
\usepackage{amsmath}
\usepackage{amsfonts}
\usepackage{amssymb,dsfont}
\usepackage{amsthm}
\usepackage{comment}
\usepackage{diagbox}

\newtheorem{theorem}{Theorem}[section]
\newtheorem{corollary}{Corollary}[theorem]
\newtheorem{lemma}[theorem]{Lemma}
\theoremstyle{definition}

\theoremstyle{remark}

\theoremstyle{definition}

\newcommand{\F}{\mathbb{F}}

\title{On the max min of the algebraic degree and the nonlinearity of a Boolean function on an affine subspace}
\author{Jan Kristian Haugland}

\begin{document}

\maketitle

\begin{abstract}
We investigate the max min of the algebraic degree and the nonlinearity of a Boolean function in $n$ variables when restricted to a $k$-dimensional affine subspace of $\F_2^n$. Previous authors have focused on the cases when the max min of the algebraic degree is 0 or 1. Upper bounds, lower bounds and a conjecture on the exact value in special cases are presented.
\end{abstract}

\section{Introduction}
Let $f$ be a Boolean function in $n$ variables. With $k \leq n$, let $\alpha(f, k)$ denote the minimal algebraic degree of $f$ when restricted to a $k$-dimensional affine subspace of $\F_2^n$ (subsequently referred to as a $k$-dimensional \textit{flat}) and let $g(n, k) = \max_f \alpha(f, k)$.

Similarly, let $\alpha'(f, k)$ denote the minimal nonlinearity of $f$ when restricted to a $k$-dimensional flat, and let $g'(n, k) = \max_f \alpha'(f, k)$. The nonlinearity of a Boolean function is the minimal Hamming distance to an affine function.

$f$ is said to be $k$-normal if $\alpha(f, k) = 0$ and weakly $k$-normal if $\alpha(f, k) \leq 1$ (which is equivalent to $\alpha'(f, k) = 0$), and these cases have been of particular interest. Dubuc \cite{dubuc} proved that for $n \leq 7$, any Boolean function in $n$ variables is $\lfloor \frac{n}{2} \rfloor$-normal, i.e., $g(n, k) = 0$ if $n \leq 7$ and $k = \lfloor \frac{n}{2} \rfloor$. In this note, we give an upper bound (Theorem 2.5) and a lower bound (Theorem 2.3) for $g(n, k)$, as well as considering exact values when $n-k \in \{1, 2\}$ (Theorem 2.2 and the conjecture in Section 5) and in some single cases (Section 3).

We focus mostly on $g(n, k)$, but we also give a lower bound for $g'(n, k)$ and values in some single cases.

\section{General bounds}
\begin{lemma}
The algebraic degree of $f$ when restricted to a $k$-dimensional flat ($k \geq 1$) is equal to $k$ if and only if the sum of $f(x)$ over all vertices of the flat is 1.
\end{lemma}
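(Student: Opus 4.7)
The plan is to reduce the restriction of $f$ to the flat $A$ to a Boolean function $\tilde f$ on $\F_2^k$ via an affine bijection, and then invoke the standard formula for the top coefficient of the algebraic normal form.

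First I would fix an affine bijection $\phi : \F_2^k \to A$. Writing $A = a + V$ with $V$ a $k$-dimensional subspace of $\F_2^n$, I take $\phi(y) = a + By$ where the columns of the $n \times k$ matrix $B$ form a basis of $V$. Set $\tilde f = f \circ \phi : \F_2^k \to \F_2$. Because $\phi$ is an affine bijection, composition with $\phi$ preserves algebraic degree, so $\deg \tilde f$ is exactly the algebraic degree of $f$ restricted to the flat $A$.

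Next I would expand $\tilde f$ in its algebraic normal form,
\[
\tilde f(y) = \sum_{S \subseteq \{1,\dots,k\}} c_S \prod_{i \in S} y_i,
\]
so $\deg \tilde f = k$ iff the top coefficient $c_{\{1,\dots,k\}}$ equals $1$. The classical Möbius-style formula over $\F_2$ gives $c_S = \sum_{y : \mathrm{supp}(y) \subseteq S} \tilde f(y)$; taking $S = \{1,\dots,k\}$ yields
\[
c_{\{1,\dots,k\}} = \sum_{y \in \F_2^k} \tilde f(y).
\]
Since $\phi$ is a bijection onto $A$, the right-hand side equals $\sum_{x \in A} f(x)$, proving both implications.

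The only nontrivial step is making sure the algebraic degree of the restriction is well defined, i.e., independent of the parametrization $\phi$ chosen; this is standard, since two such parametrizations differ by an affine bijection of $\F_2^k$, which preserves the degree of any Boolean polynomial. Once that is granted, everything reduces to the well-known identity that the coefficient of the top monomial $y_1 \cdots y_k$ in the ANF of a function on $\F_2^k$ is the $\F_2$-sum of its values over the whole cube.
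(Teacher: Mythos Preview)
Your argument is correct. The paper itself does not give a proof at all: it simply cites Theorem~2.5 of \cite{cusick} and moves on. Your proposal is therefore strictly more informative than the paper's treatment. What you do---pull the flat back to $\F_2^k$ by an affine isomorphism and read off the top ANF coefficient as the full sum of values---is exactly the standard argument underlying the cited result, so there is no real methodological difference, only a difference in level of detail. The one point worth tightening is the phrase ``composition with $\phi$ preserves algebraic degree'': strictly speaking $\phi$ goes from $\F_2^k$ into $\F_2^n$, so what you mean (and say correctly in your final paragraph) is that the degree of $\tilde f$ is independent of the choice of affine parametrization of $A$, and that this common value is by definition the degree of the restriction $f|_A$.
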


\begin{proof}
Confer \cite{cusick}, Theorem 2.5.
\end{proof}

\begin{theorem}
If $n \geq 2$, then $g(n, n-1) = n-2$.
\end{theorem}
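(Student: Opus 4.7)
The plan is to prove the two inequalities $g(n, n-1) \leq n - 2$ and $g(n, n-1) \geq n - 2$ separately. Starting with the upper bound: every $(n-1)$-dimensional flat can be written as $H_{a,b} = \{x \in \F_2^n : a \cdot x = b\}$ with $a \in \F_2^n \setminus \{0\}$ and $b \in \F_2$, and by Lemma 2.1 the restriction $f|_{H_{a,b}}$ has degree at most $n-2$ if and only if $\sum_{x \in H_{a,b}} f(x) \equiv 0 \pmod 2$. Writing the indicator of $H_{a,b}$ over $\F_2$ as $1+b+a\cdot x$, this sum becomes $(1+b)\sum_x f(x) + \sum_i a_i \sum_x x_i f(x) \pmod 2$. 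Expressing the inner sums in terms of the ANF coefficients of $f$ (namely $\sum_x f(x) \equiv c_{[n]}$ and $\sum_x x_i f(x) \equiv c_{[n]} + c_{[n]\setminus\{i\}}$), one obtains the formula $c_{[n]}(1 + b + \sum_i a_i) + \sum_i a_i c_{[n]\setminus\{i\}} \pmod 2$. A short case split on $c_{[n]}$ then finishes: when $c_{[n]} = 1$, any nonzero $a$ combined with an appropriate $b$ zeros the expression, and when $c_{[n]} = 0$ the expression is a $b$-independent linear form in $a$ whose kernel has dimension at least $n-1 \geq 1$ and therefore contains a nonzero vector.

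For the lower bound I need to exhibit a Boolean function $f$ with $\alpha(f, n-1) \geq n-2$, i.e., one whose restriction to every $(n-1)$-flat has algebraic degree at least $n - 2$. The case $n = 2$ is vacuous. For $n = 3$, the polynomial $f = x_1 x_2 + x_1 x_3 + x_2 x_3$ works: a short computation shows that neither its support nor its complement contains a two-dimensional flat, so $f|_H$ is non-constant on every $H$. For $n = 4$, $f = x_1 x_2 + x_3 x_4$ can be shown to have $\deg(f|_H) = 2$ on every three-dimensional flat by substituting a representative from each weight-class of normal vector $a$ and using the symmetries of $f$. For general $n$ the construction requires more care, since the restriction must not only not vanish but must retain degree $n - 2$; natural candidates are higher-degree elementary symmetric polynomials whose restrictions to hyperplanes are tractable via identities such as $\sigma_1 \sigma_k \equiv k\sigma_k + (k+1)\sigma_{k+1} \pmod 2$, with the precise choice possibly depending on the parity of $n$.

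The main obstacle is the lower bound's general construction. The upper bound is a clean computation in the ANF with a two-case split, but producing a single explicit $f$ for every $n$ and uniformly verifying that the restriction degree never drops below $n-2$ across all $(n-1)$-flats calls for either an inspired explicit construction or a more structural existence argument; union-bound type counting alone is too weak, since the number of $(n-1)$-flats roughly cancels the codimension gained by the degree condition on each.
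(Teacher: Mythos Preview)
Your upper bound is correct. It differs from the paper's route---the paper picks an $(n-2)$-flat, notes that among its three nontrivial cosets two must carry the same $f$-sum, and unions those two into the desired hyperplane---but your ANF computation of $\sum_{x\in H_{a,b}}f(x)$ in terms of $c_{[n]}$ and the $c_{[n]\setminus\{i\}}$, followed by the two-case split on $c_{[n]}$, is a valid and self-contained alternative.

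The genuine gap is the lower bound, and you identify it yourself: you only have ad hoc examples for $n\le 4$ and no construction for general $n$. The paper resolves this with a single uniform choice, the indicator of the radius-$1$ Hamming ball, $f(x)=1$ iff $\mathrm{wt}(x)\le 1$. The verification avoids ANF manipulations entirely. Any hyperplane $H$ must contain some support point $y$ of $f$, since otherwise the complementary hyperplane would contain $0,e_1,\dots,e_n$ and hence span $\F_2^n$. Slicing $H$ by $\{x_i=1\}$ when $y=e_i$, or by $\{\sum_j x_j=0\}$ when $y=0$, yields a sub-flat of $H$ of dimension at least $n-2$ that meets the support of $f$ in exactly the one point $y$; Lemma~2.1 then forces the restriction to that slice to have top degree, hence $\deg(f|_H)\ge n-2$. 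So the missing ingredient is not a symmetric-function identity but a sparse-support construction combined with one more application of Lemma~2.1 a dimension down.
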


\begin{proof}
First, we prove that $g(n, n - 1) < n - 1$. This follows almost directly from Lemma 2.1. We only need to identify an $(n - 1)$-dimensional flat on which the sum of $f(x)$ is 0.

Let $S$ be any $(n - 2)$-dimensional flat. $S$ must have three additional cosets which are also flats. There must be two of them on which $f$ has the same sum, and so we can simply combine them into an $(n - 1)$-dimensional flat on which $\sum f(x)=0$ as required. 

Second, we verify that there exists an example of a function $f$ for which $\alpha(f, n - 1) \geq n - 2$. We can use the function that maps $x$ to 1 if the weight of $x$ is 0 or 1, and 0 otherwise. A flat of dimension $n-1$ must contain at least one $x$ such that $f(x)=1$, otherwise the complement would contain all of them and generate the whole of $\F_2^n$. Then the intersection with either $x_i = 1$ (if $x$ is 1 in the $i$th coordinate) or $\sum x_i=0$ (if $x$ is (0, 0, ..., 0)) contains exactly one vertex $y$ for which $f(y) = 1$, and $f$ must have maximal degree $\geq n-2$ here. Therefore, $f$ is also of degree $\geq n-2$ on the original flat.
\end{proof}

\begin{theorem}
If $n, k, d$ are integers with $n > k > d \geq 0$ satisfying $$(k+1)(n-k) + 2 \leq \binom{k}{0} + \dotsc + \binom{k}{k-d-1}$$ then $g(n, k) > d$. Similarly, if $m < 2^{k-2}$ and $$\binom{2^k}{0} + \dotsc + \binom{2^k}{m} \leq 2^{2^k - (k+1)(n-k+1) -2}$$ then $g'(n, k) > m$.
\end{theorem}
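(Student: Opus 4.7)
Both bounds are established by a counting (union-bound) argument: I will show that the number of Boolean functions $f \colon \F_2^n \to \F_2$ whose restriction to some $k$-flat has algebraic degree at most $d$ (respectively, nonlinearity at most $m$) is strictly less than $2^{2^n}$. Consequently some $f$ must be ``good'' on every $k$-flat, witnessing $g(n,k) > d$ (respectively, $g'(n,k) > m$).

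For the first statement, fix a $k$-flat $F$. The number of Boolean functions on $F$ of algebraic degree at most $d$ equals $2^{\binom{k}{0} + \dotsb + \binom{k}{d}}$, parametrized by the ANF coefficients of monomials of degree $\leq d$; each such restriction extends to $\F_2^n$ in $2^{2^n - 2^k}$ ways. Letting $N$ denote the number of $k$-flats in $\F_2^n$, a union bound over flats gives $|\{f : \alpha(f,k) \leq d\}| \leq N \cdot 2^{\binom{k}{0} + \dotsb + \binom{k}{d}} \cdot 2^{2^n - 2^k}$. Using the symmetry $\binom{k}{i} = \binom{k}{k-i}$, the desired inequality $< 2^{2^n}$ reduces to the strict bound $N < 2^{\binom{k}{0} + \dotsb + \binom{k}{k-d-1}}$.

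The key estimate is $N < 2^{(k+1)(n-k) + 2}$. Writing $N = \binom{n}{k}_2 \cdot 2^{n-k}$ and factoring the Gaussian binomial as $\binom{n}{k}_2 = \prod_{i=1}^{k} (2^{n-k+i}-1)/(2^i - 1)$, each factor is strictly less than $2^{n-k}/(1 - 2^{-i})$, so that $\binom{n}{k}_2 < 2^{k(n-k)} \prod_{i=1}^{\infty}(1 - 2^{-i})^{-1} < 4 \cdot 2^{k(n-k)}$, where the last step uses that the infinite product converges to approximately $3.463$. Combined with the hypothesis $(k+1)(n-k) + 2 \leq \binom{k}{0} + \dotsb + \binom{k}{k-d-1}$, this yields the required strict inequality on $N$.

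The nonlinearity statement follows by the same template: replace the count of degree-$\leq d$ restrictions on $F$ by the count of restrictions of nonlinearity $\leq m$, which are covered by the Hamming balls of radius $m$ around the $2^{k+1}$ affine functions on $F$ and thus number at most $2^{k+1} \sum_{j=0}^{m} \binom{2^k}{j}$ (the condition $m < 2^{k-2}$ ensures these balls are pairwise disjoint, since distinct affine functions differ on at least $2^{k-1}$ points). Combining the union bound with $N < 2^{(k+1)(n-k) + 2}$ and rearranging via $(k+1)(n-k+1) = (k+1)(n-k) + (k+1)$, the target inequality reduces to exactly the stated hypothesis $\sum_{j=0}^{m} \binom{2^k}{j} \leq 2^{2^k - (k+1)(n-k+1) - 2}$. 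The main technical obstacle throughout is securing the strict Gaussian-binomial bound $\binom{n}{k}_2 < 4 \cdot 2^{k(n-k)}$; once that is in place, both parts are routine arithmetic.
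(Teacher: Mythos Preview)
Your proposal is correct and follows essentially the same union-bound argument as the paper: both bound the number of $k$-flats by $2^{(k+1)(n-k)+2}$ via the estimate $\prod_{i\geq 1}(1-2^{-i}) > 1/4$, then multiply by the proportion of functions that are ``bad'' on a given flat. The only cosmetic differences are that the paper phrases everything in terms of proportions rather than counts, and it actually proves the product bound (via $(1-2^{-i})^{2^{i-1}}\geq 1/2$) rather than citing the numerical value $\approx 3.463$; also, your parenthetical about disjointness of Hamming balls is not needed for the upper bound you use, though it does explain why the hypothesis $m<2^{k-2}$ is natural.
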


\begin{proof}
We begin by observing that $\left( 1-\frac{1}{2^i} \right)^{2^{i-1}} \geq \frac{1}{2}$ for $i \geq 1$, and thus $\prod_{i \geq 1} \left( 1-\frac{1}{2^i} \right) \geq \left(\frac{1}{2}\right)^{1+\frac{1}{2}+\frac{1}{4}+\dotsc} = \frac{1}{4}$. (We could have used the exact value 0.288788... for the infinite product, but $\geq \frac{1}{4}$ is good enough here.) We can now estimate the number of $k$-dimensional flats in $\F_2^n$, which is given by $$2^{n-k} \frac{(2^n-1) (2^{n-1}-1) \dotsc (2^{n-k+1}-1)}{(2^k-1) (2^{k-1}-1) \dotsc (2^1-1)}$$ We have $$(2^n-1) (2^{n-1}-1) \dotsc (2^{n-k+1}-1) < 2^{k(n-\frac{k-1}{2})}$$ $$(2^k-1) (2^{k-1}-1) \dotsc (2^1-1) > \left( \prod_{i \geq 1} \left( 1-\frac{1}{2^i} \right) \right) 2^{k\frac{k+1}{2}} \geq \frac{1}{4} \times 2^{k\frac{k+1}{2}}$$ $$\implies 2^{n-k} \frac{(2^n-1) (2^{n-1}-1) \dotsc (2^{n-k+1}-1)}{(2^k-1) (2^{k-1}-1) \dotsc (2^1-1)} < 2^{(n-k) + k(n-k) + 2}$$ $$= 2^{(k+1)(n-k)+2}$$ For each of these flats, the proportion of functions of algebraic degree $\leq d$ on it is $2^{-\left(\binom{k}{0} + \dotsc + \binom{k}{k-d-1}\right)}$. By the condition of the Theorem, the proportion of functions of algebraic degree $\leq d$ on at least one flat must be smaller than 1, and it follows that there must exist a function $f:\F_2^n \to \F_2$ such that $\alpha(f, k) > d$.

The argument only uses the scarceness of functions of low degree, and a similar argument applies to nonlinearity (or any other property, for that matter). Since $m < 2^{k-2}$, the proportion of functions of nonlinearity $\leq m$ on any given $k$-dimensional flat is $$\left( \binom{2^k}{0} + \dotsc + \binom{2^k}{m} \right) 2^{k+1-2^k}$$ and the bound for $g'(n, k)$ follows.
\end{proof}

\begin{corollary}
Suppose $n, k$ are integers and that $k \geq 5$. If $k + 2 \leq n \leq \frac{3k-1}{2}$, then $g(n, k) \geq k-2$. If $\frac{3k}{2} \leq n \leq \frac{(k+1)(k+4)}{6}$, then $g(n, k) \geq k-3$.
\end{corollary}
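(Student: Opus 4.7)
The plan is to apply Theorem 2.3 directly with two different choices of the auxiliary parameter $d$, one for each of the two claims. For the first assertion $g(n,k) \geq k-2$, I would take $d = k-3$, which is a valid non-negative integer since $k \geq 5$. The binomial sum on the right-hand side of the Theorem 2.3 hypothesis then collapses to $\binom{k}{0} + \binom{k}{1} + \binom{k}{2} = (k^2+k+2)/2$. For the second assertion $g(n,k) \geq k-3$, I would take $d = k-4$, so the sum extends through $\binom{k}{3}$ and equals $(k^3+5k+6)/6$.

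Since the left-hand side $(k+1)(n-k)+2$ of the Theorem 2.3 hypothesis is strictly increasing in $n$, it will suffice in each case to verify the inequality at the largest admissible value of $n$; all smaller integer values in the stated range will then satisfy it a fortiori. For the first claim, substituting $n = (3k-1)/2$ turns the inequality into $(k^2+3)/2 \leq (k^2+k+2)/2$, i.e.\ $k \geq 1$, which is trivial. For the second claim, substituting $n = (k+1)(k+4)/6$ and clearing denominators, the inequality reduces to $10 \leq 2k$, i.e.\ $k \geq 5$, which is exactly the standing hypothesis.

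The main obstacle, such as it is, is purely bookkeeping: Theorem 2.3 does all the real work, and the corollary just amounts to identifying the largest $d$ for which the binomial sum on the right is big enough to accommodate the specified range of $n$. It is worth noting that the hypothesis $k \geq 5$ plays a dual role here: it is exactly what is needed for the second inequality to hold at its upper endpoint, and it is also what guarantees that the range $k+2 \leq n \leq (3k-1)/2$ in the first part contains any integer at all (this range collapses to the single value $n = k+2$ when $k=5$).
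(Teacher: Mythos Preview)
Your proposal is correct and follows essentially the same route as the paper: apply Theorem~2.3 with $d=k-3$ for the first claim and $d=k-4$ for the second, reduce the binomial sums to $(k^2+k+2)/2$ and $(k^3+5k+6)/6$ respectively, and verify the resulting inequalities at the upper endpoints of the stated ranges for $n$. Your explicit observation that the second inequality reduces to $k\geq 5$ matches the paper's computation, and your remarks on the role of the hypothesis $k\geq 5$ are a welcome addition.
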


\begin{proof}
Regarding the first claim, we can apply Theorem 2.3 with $d=k-3$. Since $n \leq \frac{3k-1}{2}$, we have $$(k+1)(n-k)+2 \leq \frac{k^2+3}{2} < \frac{k^2+k+2}{2} = \binom{k}{0} + \binom{k}{1} + \binom{k}{2}$$ For the second claim, we put $d=k-4$ instead. Since $n \leq \frac{(k+1)(k+4)}{6}$, we have $$(k+1)(n-k)+2 \leq \frac{k^3+3k+16}{6} \leq \frac{k^3+5k+6}{6} = \binom{k}{0} + \binom{k}{1} + \binom{k}{2} + \binom{k}{3}$$ In both cases, the condition of Theorem 2.3 is satisfied.
\end{proof}

We can also give an upper bound by simple means.

\begin{lemma}
If $a$, $b$ and $c$ are nonnegative integers satisfying $$\max (a + 1, b) \leq c \leq a+b$$ then $$\lceil 2^a \frac{2^b - 1}{2^c - 1} \rceil = 2^{a+b-c}$$
\end{lemma}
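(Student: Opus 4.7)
\medskip

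\noindent\textbf{Proof plan.} Write $t = a+b-c$, which is a nonnegative integer because $c \le a+b$; the goal is to show $\lceil 2^a(2^b-1)/(2^c-1)\rceil = 2^t$. Since the quotient is positive, this is equivalent to the two-sided estimate
$$
2^t - 1 \;<\; 2^a\,\frac{2^b-1}{2^c-1} \;\le\; 2^t .
$$
I would simply clear the (positive) denominator $2^c-1$ and check each inequality as a statement about integers. This turns the whole lemma into elementary manipulation, so there is no conceptual obstacle; the only thing to be careful about is that the right-hand inequality is non-strict while the left-hand one is strict.

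For the upper bound, $2^a(2^b-1) \le 2^t(2^c-1)$ expands to $2^{a+b} - 2^a \le 2^{a+b} - 2^t$, i.e.\ $2^t \le 2^a$, i.e.\ $t \le a$. Substituting $t = a+b-c$, this is exactly $b \le c$, which is one of the hypotheses. For the lower bound, $2^a(2^b-1) > (2^t-1)(2^c-1)$ expands to $2^{a+b} - 2^a > 2^{a+b} - 2^t - 2^c + 1$, i.e.\
$$
2^t + 2^c \;>\; 2^a + 1 .
$$
Here I would use the hypothesis $c \ge a+1$: then $2^c \ge 2^{a+1} = 2^a + 2^a \ge 2^a + 1$ (since $a \ge 0$), and combining with $2^t \ge 1$ gives $2^t + 2^c \ge 2^a + 2 > 2^a + 1$, which is strict as required.

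So both directions reduce cleanly to the two hypotheses $b \le c$ and $a+1 \le c$ (the bound $c \le a+b$ is needed only to ensure $t \ge 0$ so that $2^t$ is the correct candidate value). The main ``obstacle,'' such as it is, is just to confirm that the strict inequality on the lower side really does follow from $c \ge a+1$ rather than only $c \ge a$; this is where the $+1$ in $\max(a+1, b)$ is essential, since with $c = a$ one could have $2^t + 2^c = 2^a + 1$ on the nose.
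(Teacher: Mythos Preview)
Your proof is correct and follows essentially the same approach as the paper: both establish the double inequality $2^{a+b-c}-1 < 2^a(2^b-1)/(2^c-1) \le 2^{a+b-c}$, using $b\le c$ for the upper bound and $a+1\le c$ for the strict lower bound. The only cosmetic difference is that you clear the denominator and compare integers, whereas the paper rewrites the fraction as $2^{a+b-c} - 2^a(1-2^{b-c})/(2^c-1)$ and bounds the correction term; the logical content is the same.
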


\begin{proof}
We have $$2^a \frac{2^b - 1}{2^c - 1} = 2^a \left( \frac{1}{2^{c-b}} - \frac{1-2^{b-c}}{2^c - 1} \right) > 2^a \left( \frac{1}{2^{c-b}} - \frac{1}{2^c - 1} \right)$$ and since $a < c$, it follows that $$2^a \frac{2^b - 1}{2^c - 1} > 2^{a+b-c} - 1$$ On the other hand, $b \leq c$ implies that $$2^a \frac{2^b - 1}{2^c - 1} \leq 2^{a+b-c}$$ Lastly, since $c \leq a + b$, it is clear that $2^{a+b-c}$ is an integer, and the conclusion follows.
\end{proof}

\begin{theorem}
If $k > d \geq 0$ and $n \geq 2^{k-1} + k - \lfloor 2^{d-1} \rfloor$, then $g(n, k) \leq d$.
\end{theorem}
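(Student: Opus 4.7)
The plan is to prove the theorem via an iterative restriction argument combined with the counting estimate of Lemma 2.5. Starting from $V_0 = \F_2^n$, I would build a decreasing chain
$$V_0 \supsetneq V_1 \supsetneq \cdots \supsetneq V_{n-k}$$
of affine flats, where each $V_{i+1}$ is an affine hyperplane of $V_i$, so that $V_{n-k}$ is a $k$-flat on which $f$ has algebraic degree at most $d$.

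The first step is to reformulate the target. Applying Lemma 2.1 to each $(d+1)$-subflat $W \subseteq V$ yields: $f|_V$ has degree at most $d$ if and only if every $(d+1)$-subflat $W \subseteq V$ satisfies $\sum_{x \in W} f(x) = 0$. Call such a $W$ \emph{good} and the others \emph{bad}; the task becomes finding a $k$-flat whose $(d+1)$-subflats are all good.

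For the iterative step at level $i$, I would choose an affine hyperplane $V_{i+1} \subset V_i$ greedily, so as to reduce the number of bad $(d+1)$-subflats inherited from $V_i$. The counting here relies on Lemma 2.5: its precise ceiling formula $\lceil 2^a (2^b-1)/(2^c-1) \rceil = 2^{a+b-c}$, applied with parameters tied to the current dimension $n-i$, the target $k$, and the degree tolerance $d$, quantifies the number of affine hyperplanes of $V_i$ containing a prescribed bad subflat. A pigeonhole over the $2(2^{n-i}-1)$ affine hyperplanes of $V_i$ then delivers a hyperplane with the required reduction. Iterating this for $n-k$ steps, under the hypothesis $n - k \ge 2^{k-1} - \lfloor 2^{d-1}\rfloor$, drives the bad-subflat count to zero in $V_{n-k}$.

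The main obstacle is calibrating the quantitative progress at each step to match the exact bound $2^{k-1} - \lfloor 2^{d-1}\rfloor$. The $2^{k-1}$ term corresponds to the number of halvings required to force $f$ constant on a $k$-flat (the $d = 0$ case), and the savings $\lfloor 2^{d-1}\rfloor$ for $d \ge 1$ arises because targeting only degree $d$ lets the refinement terminate sooner---once the residual set of bad $(d+1)$-subflats is small enough to be absent from the remaining refinements. Balancing these two contributions via Lemma 2.5 so that the available $n - k$ refinement steps suffice is the technical heart of the argument.
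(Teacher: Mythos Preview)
Your top-down restriction strategy has a genuine quantitative gap. Consider the case $d=0$, so ``bad'' subflats are $1$-flats $\{x,y\}$ with $f(x)\neq f(y)$, and you need a $k$-flat containing none. In an $m$-dimensional flat $V_i$ a fixed $1$-subflat lies in exactly $2^{m-1}-1$ of the $2(2^{m}-1)$ affine hyperplanes of $V_i$, so pigeonhole gives a hyperplane with at most a
\[
\frac{2^{m-1}-1}{2(2^{m}-1)}\;<\;\frac{1}{4}
\]
fraction of the bad $1$-subflats. Thus each of your $n-k$ steps cuts the bad count by roughly a factor $4$. Starting from as many as $2^{2n-2}$ bad pairs (e.g.\ $f$ balanced), after $n-k=2^{k-1}$ steps you are left with on the order of $2^{2n-2-2(n-k)}=2^{2k-2}$ bad pairs, not zero. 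The same obstruction holds for general $d$: a $(d+1)$-subflat lies in a $\approx 2^{-(d+2)}$ fraction of the hyperplanes, and iterating leaves $\Theta\bigl(2^{(d+2)(k-d-1)}\bigr)$ bad subflats in the final $k$-flat. A constant-factor linear recursion cannot reach zero here; Lemma~2.4 does not change this, since it only evaluates ceilings of the form $\lceil 2^a(2^b-1)/(2^c-1)\rceil$ and cannot manufacture a faster-than-geometric decay.

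The paper's argument is not a descent but a \emph{bottom-up} construction, and the crucial difference is that its recursion is quadratic rather than linear. One tracks $h(\delta)$, the maximum number of \emph{parallel} $\delta$-flats on which $f$ is constant, and pairs them up: from $h(\delta-1)$ such parallel flats one gets at least $\bigl\lceil h(\delta-1)(h(\delta-1)-2)\big/\bigl(4(2^{n-\delta+1}-1)\bigr)\bigr\rceil$ parallel $\delta$-flats on which $f$ is constant. Lemma~2.4 is applied with $a=b=n-(2^{\delta-1}+\delta)+1$, $c=n-\delta+1$ to evaluate this ceiling exactly, yielding $h(\delta)\ge 2^{\,n-(2^{\delta}+\delta)+1}$; the doubly-exponential loss in the exponent is precisely what produces the $2^{k-1}$ in the hypothesis. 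For $d>0$ one runs this to $\delta=k-d$ and then switches to combining the flats without matching constants (raising the degree by one at each step), which accounts for the saving of $2^{d-1}$. Your sketch lacks both the pairing-of-parallel-flats idea and the resulting quadratic recursion, and without them the bound $n\ge 2^{k-1}+k-\lfloor 2^{d-1}\rfloor$ cannot be reached.
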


\begin{proof}
Two $\delta$-dimensional flats $\{x_i\}$ and $\{y_j\}$ are said to be \textit{parallel} if there exists a vector $z \in \F_2^n$ such that $\{x_i + z\}$ = $\{y_j\}$. Note that there are exactly $2^{n-\delta}$ flats that are parallel to $\{x_i\}$ (including itself), all mutually disjoint.

First, we prove the case $d = 0$. It suffices to assume that $n = 2^{k-1} + k$ and verify that $\alpha(f, k) = 0$ for any function $f$ in $n$ variables.

For $0 \leq \delta \leq k$, let $h(\delta)$ denote the maximal number of parallel $\delta$-dimensional flats on which $f$ is constant. Trivially, $h(0) = 2^n$.

For $\delta = 1, 2, ...$, there are at least $\lceil \frac{h(\delta - 1)(h(\delta - 1) - 2)}{4} \rceil$ pairs of disjoint parallel $(\delta - 1)$-dimensional flats on which $f$ is equal to the same constant, while there are $2^{n-\delta+1}-1$ ways to pair up all $(\delta-1)$-dimensional flats parallel to a given one into parallel $\delta$-dimensional flats. Thus we have $$h(\delta) \geq \lceil \frac{h(\delta-1)(h(\delta-1)-2)}{4(2^{n-\delta+1}-1)} \rceil$$ We can now show that $h(\delta) \geq 2^{n-(2^\delta+\delta)+1}$ for $0 \leq \delta \leq k-1$ by induction on $\delta$. By Lemma 2.4 with $a = b = n-(2^{\delta-1}+\delta)+1$, $c = n - \delta + 1$ we have the induction step $$\lceil \frac{h(\delta-1)}{2} \frac{\frac{h(\delta-1)}{2}-1}{2^{n-\delta+1}-1} \rceil = 2^{n-(2^\delta+\delta)+1}$$ In particular, $h(k-1) \geq 4$, so we have at least two different parallel $(k-1)$-dimensional flats on which $f$ is the same constant, and we can combine them into a $k$-dimensional flat. This completes the case $d=0$. (It is straightforward to verify that the conditions on $a$, $b$ and $c$ are satisfied.)

Second, when $d > 0$, the idea is to not distinguish between the admissible $(\delta-1)$-dimensional flats when forming $\delta$-dimensional flats for $\delta > k - d$. Then $f$ will have algebraic degree at most 1 on the $(k-d+1)$-dimensional flats, at most 2 on the $(k-d+2)$-dimensional flats, and so on.

With $n = 2^{k-1} + k - 2^{d-1}$, the proof that $h(k-d) \geq 2^{n-(2^{k-d}+k-d)+1}$ is similar as above, and we omit the details. Then, for $k-d+1 \leq \delta \leq k$, let $h'(\delta)$ denote the maximal number of parallel $\delta$-dimensional flats formed by the admissible $(\delta-1)$-dimensional flats, and put $h'(k-d) = h(k-d)$. We have $$h'(\delta) \geq \lceil \frac{h(\delta-1)(h(\delta-1)-1)}{2(2^{n-\delta+1}-1)} \rceil$$ and as above, we can show that $h'(\delta) \geq 2^{n-(2^\delta+\delta)+2^{\delta-k+d}}$ for $k-d+1 \leq \delta \leq k-1$. By Lemma 2.4 with $a = n - (2^{\delta-1} + \delta) + 2^{\delta - k + d - 1}$, $b = a+1$, $c = n - \delta + 1$, we have the induction step $$\lceil \frac{h'(\delta-1)}{2} \frac{h'(\delta-1)-1}{2^{n-\delta+1}-1} \rceil =  2^{n-(2^\delta+\delta)+2^{\delta-k+d}}$$ Thus we have $h'(k-1) \geq 2$, i.e., two different parallel $(k-1)$-dimensional flats that we can combine into a $k$-dimensional flat on which $f$ has algebraic degree at most $d$.

\end{proof}

\section{Single cases}
We have verified that $g(5, 3) = 1$, $g(6, 4) = 2$ and $g(7, 5) = 3$ with the help of Langevin's classification of Boolean functions in up to six variables \cite{langevin1} and in seven variables \cite{langevin2}. (A few more details are given in Section 5.) Polujan \textit{et al.} \cite{polujan} found a bent function in 8 variables that is not 4-normal, and Canteaut \textit{et al.} \cite{canteaut} found a bent function in 10 variables that is not weakly 5-normal. In other words, $g(8, 4) \geq 1$ and $g(10, 5) \geq 2$.

Given positive integers $k$ and $d$, an algorithm for searching for a function $f$ such that $\alpha(f, k) \geq d$ is as follows. Start with a random function $f_0$ in $k$ variables. For each $i$, $1 \leq i \leq N$, let $f_i$ be given by the same truth table as that of $f_{i-1}$ except at one or two randomly chosen points. If the number of flats on which the degree is $<d$ is higher for $f_i$ than for $f_{i-1}$, discard the new $f_i$ and let $f_i \leftarrow f_{i-1}$ instead. If the number of such flats has dropped to zero within $N$ steps, we are done. If not, we may be stuck in a local dead end and can try again from the top. In this manner we were able to find a function $f^{(7, 4)}:\F_2^7 \to \F_2$ such that $\alpha(f^{(7, 4)}, 4) = 2$ and a function $f^{(8, 5)}:\F_2^8 \to \F_2$ such that $\alpha(f^{(8, 5)}, 5) = 3$. Langevin found equivalent functions with fewer terms in the algebraic normal form (private communication), specifically \newline \newline $f^{(7, 4)}(x_1, \dotsc, x_7) \sim x_1x_2x_3x_4x_5x_7 \oplus x_1x_2x_3x_4x_5 \oplus x_1x_2x_3x_4x_6x_7 \oplus x_1x_2x_3x_4 \oplus x_1x_2x_4x_5x_6 \oplus x_1x_2x_4x_5x_7 \oplus x_1x_2x_6x_7 \oplus x_1x_2x_6 \oplus x_1x_2x_7 \oplus x_1x_3x_4x_5x_6 \oplus x_1x_3x_4x_5 \oplus x_1x_3x_4x_7 \oplus x_1x_3x_5x_7 \oplus x_1x_4x_5x_7 \oplus x_1x_4x_5 \oplus x_1x_4x_6x_7 \oplus x_1x_4x_6 \oplus x_1x_6x_7 \oplus x_1x_7 \oplus x_2x_3x_4x_5 \oplus x_2x_3x_5x_6x_7 \oplus x_2x_3x_5x_7 \oplus x_2x_4x_6x_7 \oplus x_2x_4x_6 \oplus x_2x_5x_6x_7 \oplus x_3x_4x_5x_6 \oplus x_3x_4x_5x_7 \oplus x_3x_4x_6 \oplus x_3x_4 \oplus x_3x_5 \oplus x_3x_7 \oplus x_4x_5x_7 \oplus x_5x_7 \oplus x_6x_7$ \newline \newline and \newline \newline $f^{(8, 5)}(x_1, \dotsc, x_8) \sim x_1x_2x_3x_4x_5x_6x_8 \oplus x_1x_2x_3x_4x_5x_6 \oplus x_1x_2x_3x_4x_6x_7x_8 \oplus$ \newline $x_1x_2x_3x_4x_6x_7 \oplus x_1x_2x_3x_4x_6 \oplus x_1x_2x_3x_5x_6x_7 \oplus x_1x_2x_3x_5x_8 \oplus x_1x_2x_3x_6 \oplus x_1x_2x_4x_5x_6x_8 \oplus x_1x_2x_4x_5x_6 \oplus x_1x_2x_4x_6 \oplus x_1x_2x_4x_7 \oplus x_1x_2x_5x_6x_7 \oplus x_1x_2x_6x_7 \oplus x_1x_2x_6 \oplus x_1x_3x_4x_5x_8 \oplus x_1x_3x_4x_5 \oplus x_1x_3x_4x_6 \oplus x_1x_3x_4x_7x_8 \oplus x_1x_3x_4x_7 \oplus x_1x_3x_5x_7x_8 \oplus x_1x_3x_5x_7 \oplus x_1x_3x_5x_8 \oplus x_1x_3x_6x_7x_8 \oplus x_1x_3x_8 \oplus x_1x_4x_5x_6x_7x_8 \oplus x_1x_4x_5x_6x_7 \oplus x_1x_4x_6 \oplus x_1x_4x_7 \oplus x_1x_4x_8 \oplus x_1x_5x_7 \oplus x_1x_7x_8 \oplus x_2x_3x_4x_5x_6x_7x_8 \oplus x_2x_3x_4x_5x_7x_8 \oplus x_2x_3x_4x_5x_8 \oplus x_2x_3x_4x_6x_7x_8 \oplus x_2x_3x_4x_6x_8 \oplus x_2x_3x_4x_7 \oplus x_2x_3x_4x_8 \oplus x_2x_3x_5x_6x_7x_8 \oplus x_2x_3x_5x_6x_7 \oplus x_2x_3x_5 \oplus x_2x_3x_6x_7x_8 \oplus x_2x_3x_6x_8 \oplus x_2x_3x_7 \oplus x_2x_4x_5x_6x_7x_8 \oplus x_2x_4x_5x_6x_7 \oplus x_2x_4x_5x_7x_8 \oplus x_2x_4x_6x_7x_8 \oplus x_2x_4x_6x_8 \oplus x_2x_4x_6 \oplus x_2x_4x_7x_8 \oplus x_2x_5x_6x_7 \oplus x_2x_5x_7x_8 \oplus x_2x_5x_8 \oplus x_2x_6x_7x_8 \oplus x_3x_4x_5x_6x_7x_8 \oplus x_3x_4x_5x_6x_7 \oplus x_3x_4x_5x_8 \oplus x_3x_4x_6x_8 \oplus x_3x_4x_6 \oplus x_3x_4x_7 \oplus x_3x_5x_6x_7 \oplus x_3x_5x_6x_8 \oplus x_3x_5x_7x_8 \oplus x_3x_5x_8 \oplus x_3x_6x_7x_8 \oplus x_3x_6x_7 \oplus x_4x_5x_6x_7x_8 \oplus x_4x_5x_6x_8 \oplus x_5x_7x_8 \oplus x_6x_7x_8$ \newline \newline Thus, since $g(7, 4) \leq g(6, 4)$ and $g(8, 5) \leq g(7, 5)$, we have $g(7, 4) = 2$ and $g(8, 5) = 3$.

As for nonlinearity, the optimal functions are of course bent or almost bent when $k=n$, which gives $g'(2, 2) = 1$, $g'(3, 3) = 2$, $g'(4, 4) = 6$, $g'(5, 5) = 12$ and $g'(6, 6) = 28$. Using Langevin's classifications, we found that this is common also in other cases with small $n$ and $k$ and nonzero value of $g'(n, k)$. We have $g'(4, 3) = 2$, $g'(5, 4) = 4$, $g'(6, 4) = 4$ and $g'(6, 5) = 12$.

\section{Tabulated values}
Taking all the above results and single cases into account, we get the following values and lower bounds for $g(n, k)$ when $n \leq 12$ and $k \leq 6$ (upper bounds, where not specified, can be inferred from lower values for $n$).
\newline

\begin{tabular}{|c|c|c|c|c|c|c|c|c|c|c|c|c|}\hline
\diagbox{$k$}{$n$} & 1 & 2 & 3 & 4 & 5 & 6 & 7 & 8 & 9 & 10 & 11 & 12 \\ \hline
1 & 1 & 0 & 0 & 0 & 0 & 0 & 0 & 0 & 0 & 0 & 0 & 0 \\
2 & & 2 & 1 & 0 & 0 & 0 & 0 & 0 & 0 & 0 & 0 & 0 \\
3 & & & 3 & 2 & 1 & 0 & 0 & 0 & 0 & 0 & 0 & 0 \\
4 & & & & 4 & 3 & 2 & 2 & $\geq 1$ & $\geq 0$ & $\geq 0$ & 0 or 1 & 0 \\
5 & & & & & 5 & 4 & 3 & 3 & $\geq 2$ & $\geq 2$ & $\geq 0$ & $\geq 0$ \\
6 & & & & & & 6 & 5 & $\geq 4$ & $\geq 3$ & $\geq 3$ & $\geq 3$ & $\geq 2$\\ \hline
\end{tabular}
\newline
\newline

Here are the corresponding values and bounds for $g'(n, k)$. As previously indicated, $g(n, k) \leq 1 \iff g'(n, k) = 0$ and $g(n, k) \geq 2 \iff g'(n, k) \geq 1$.
\newline

\begin{tabular}{|c|c|c|c|c|c|c|c|c|c|c|c|c|}\hline
\diagbox{$k$}{$n$} & 1 & 2 & 3 & 4 & 5 & 6 & 7 & 8 & 9 & 10 & 11 & 12 \\ \hline
1 & 0 & 0 & 0 & 0 & 0 & 0 & 0 & 0 & 0 & 0 & 0 & 0 \\
2 & & 1 & 0 & 0 & 0 & 0 & 0 & 0 & 0 & 0 & 0 & 0 \\
3 & & & 2 & 2 & 0 & 0 & 0 & 0 & 0 & 0 & 0 & 0 \\
4 & & & & 6 & 4 & 4 & $\geq 1$ & $\geq 0$ & $\geq 0$ & $\geq 0$ & 0 & 0 \\
5 & & & & & 12 & 12 & $\geq 3$ & $\geq 2$ & $\geq 1$ & $\geq 1$ & $\geq 0$ & $\geq 0$ \\
6 & & & & & & 28 & $\geq 16$ & $\geq 12$ & $\geq 9$ & $\geq 7$ & $\geq 5$ & $\geq 3$\\ \hline
\end{tabular}
\newpage
\section{A conjecture}
We think the following statement is true.\newline

\noindent\textbf{Conjecture:} Let $k$ be an integer $\geq 2$. Then $g(k + 2, k) = k - 2$.
\newline

For $k=2$, the claim follows from the result of Dubuc, and for $k\in\{3, 4, 5\}$, this has also been verified, as mentioned in Section 3.

For $k \geq 6$, Corollary 2.3.1 says that $k-2$ is indeed a valid lower bound for $g(k + 2, k)$. On the other hand, we can give a heuristic argument for why $g(k + 2, k)$ is probably not as high as $k-1$. The algebraic degree of the restriction of a random function $f\colon\F_2^{k+2} \to \F_2$ to a given $k$-dimensional flat is $\geq k-1$ with probability $1-\frac{1}{2^{\binom{k}{0} + \binom{k}{1}}} = 1-\frac{1}{2^{k+1}}$, and so the probability that this holds for all $k$-dimensional flats should be approximately $$(1-\frac{1}{2^{k+1}})^{2^2 \frac{(2^{k+2}-1)(2^{k+1}-1)}{3}}$$ (of course, the probabilities are not actually independent, which is why this is a heuristic argument and not a proof). For $k$ sufficiently large, this is approximately $e^{-\frac{4}{3}2^{k+2}}$. There are "only" $2^{2^{k+2}}$ Boolean functions in $k+2$ dimensions, which suggests that the expected number of functions for which every restriction to a $k$-dimensional flat is of degree $\geq k-1$ should be about $(\frac{2}{e^{4/3}})^{2^{k+2}}$ which tends to zero quite rapidly. Actually, we only need to check one representative from each of a much smaller number of classes of functions, which diminishes the chances of the existence of such a function even further.

For $k \in \{2, 3, 4, 5\}$, a computer search for the Boolean functions in $k+2$ variables given in \cite{langevin1}, \cite{langevin2} of algebraic degree $\leq k-2$ on the \textit{smallest} number of $k$-dimensional flats gave the following results.
\newline

\noindent\begin{tabular}{|c|c|c|}\hline
$k$ & Function & No. of flats \\ \hline
2 & $x_1 x_2 x_3 \oplus x_1 x_4 \oplus x_2$ & 10 / 140 \\ \hline
3 & $x_1 x_2 x_3 x_4 \oplus x_1 x_2 x_5 \oplus x_1 x_4 \oplus x_2 x_3$ & 15 / 620 \\ \hline
4 & $x_1 x_2 x_3 x_4 \oplus x_1 x_3 x_5 \oplus x_1 x_4 x_6$ & 20 / 2604 \\
& $\oplus x_2 x_3 x_5 \oplus x_2 x_3 x_6 \oplus x_2 x_4 x_5$ &  \\ \hline
5 & $x_1 x_2 x_3 x_4 x_5 x_6 \oplus x_1 x_2 x_3 x_4 x_7 \oplus x_1 x_2 x_4 x_5 \oplus x_1 x_2 x_6 x_7$ & 73 / 10668\\
& $\oplus x_1 x_3 x_4 x_5 \oplus x_1 x_3 x_4 x_6 \oplus x_1 x_3 x_5 x_7 \oplus x_2 x_3 x_5 x_6$ & \\ \hline
\end{tabular}
\newline

\noindent \textsc{Norwegian National Security Authority (NSM), Norway}

\noindent \textit{E-mail address:} \texttt{admin@neutreeko.net}

\end{document}